\begin{document}
\title [A stochastic ratio-dependent predator-prey model]
{Dynamics of a stochastic ratio-dependent predator-prey model}

\author[Nguyen Thi Hoai Linh, Ta Viet Ton]
{Nguyen Thi Hoai Linh, Ta Viet Ton$^*$}

\address{Nguyen Thi Hoai Linh \newline
Department of Information and Physical Sciences\\
Osaka University, Osaka 565-0871}

\address{Ta Viet Ton \newline
Department of Applied Physics, Osaka University, Suita, Osaka 565-0871, Japan}
\email{taviet.ton[at]ap.eng.osaka-u.ac.jp}

\subjclass[2010]{92D25, 37H10}
\thanks{{*} Corresponding author}
\keywords{Predator-prey model; Ratio-dependent functional 
\hfill\break\indent response; Stochastic differential equation; Moment boundedness;
Asymptotic 
\hfill\break\indent  behaviour}

\begin{abstract}
In this paper, we consider a stochastic ratio-dependent predator-prey model. We firstly prove the existence, uniqueness and positivity of the solutions. Then, the boundedness of moments of population are studied. Finally, we show the upper-growth rates and exponential death rates of population under some conditions.
\end{abstract}

\maketitle
\numberwithin{equation}{section}
\newtheorem{theorem}{Theorem}[section]
\newtheorem{lemma}[theorem]{Lemma}
\newtheorem{definition}[theorem]{Definition}
\newtheorem{remark}[theorem]{Remark}
\newtheorem{proposition}[theorem]{Proposition}
\newtheorem{hypothesis}[theorem]{Hypothesis}
\newtheorem{corollary}[theorem]{Corollary}
\allowdisplaybreaks

\section {Introduction}
The stable predator-prey Lotka-Volterra equation
\begin{equation}\label{E1}
\begin{cases}
\dot x_t= (a_1 -c_1 y_t-b_1 x_t)x_t, \\
\dot y_t=(-a_2 +c_2 x_t-b_2 y_t)y_t,
\end{cases}
\end{equation}
has attracted much interest. In this model, $x_t$ and $y_t$ represent the population density of prey species and predator species at time $t$, respectively; $a_1$ is the intrinsic growth rate of prey in the absence of predator,  $a_2$ is the death rate of predator in the absence of prey; $b_i$ measures the inhibiting effect of environment on two species; $c_i (i=1,2)$ are coefficients of the effect of a species on the other and those coefficients  are positive constants. It is well-known that the solution of \eqref{E1} is asymptotically stable.  

For the stochastic predator-prey Lotka-Volterra equation,  we have to mention one of the first attempts in this direction, a very interesting paper of Arnold et al. \cite{Ar} where the authors used the theory of Brownian
motion and the related white noise models to study the sample paths of the equation
\begin{equation}\label{E2}
\begin{cases}
dx_t= (a_1 -c_1 y_t-b_1 x_t)x_tdt+\sigma x_t dw_t,\\
dy_t=(-a_2 +c_2 x_t-b_2 y_t)y_t dt+\rho y_t dw_t,
\end{cases}
\end{equation}
where $\{w_t, t\geq 0\} $ is a $1$\,{-}\,dimensional Brownian motion defined on a  complete probability space with filtration $(\Omega, \mathcal F,\{\mathcal F_t\}_{t\geq 0},\mathbb P)$ 
 satisfying the usual conditions (see \cite{IS}).  The positive numbers $ \sigma, \rho,$  respectively, are the coefficients of the effect of environmental stochastic perturbation on the prey and on the predator population. In this model, the random factor makes influences on the intrinsic growth rates of prey and predator.

For both models \eqref{E1} and \eqref{E2}, the predator consumes the prey with functional response of type $c_1 x_t y_t$ and contributes to its growth rate $c_2 x_t y_t.$ For such models, we refer to \cite{MMR, MSR}. However, some biologists have argued that in many situations, especially when predators have to search for food, the functional response should depend on both prey's and predator's densities; for example, a Holling's type II functional response in a type of $\frac{x_ty_t}{\alpha+ \beta x_t}$ \cite{SG,TVT}, a ratio-dependent one in a type of  $\frac{x_ty_t}{x_t+\gamma y_t}$ \cite{AG}. The model in those papers are deterministic.

In this paper, we consider a ratio-dependent predator-prey model in the random environment in which the white noise makes the effect on both the growth rates  of species 
 and the inhibiting effects of environment on two species, i.e., $a_1 \to a_1 + \sigma_1  \dot w_t, -a_2 \to - a_2 +\rho_1  \overset{.} {w}_t,$ $-b_1 \to -b_1 + \sigma_1  \dot w_t, -b_2 \to -b_2 + \rho_2  \dot w_t.$
 The model is as follows
\begin{equation}\label{E4}
\begin{cases}
\begin{aligned}
dx_t=& \left[a_1(t)-b_1 (t) x_t-\frac{c_1(t) y_t}{x_t+e(t) y_t}\right] x_tdt\\
&+[\sigma_1 (t)+\sigma_2 (t) x_t ]x_tdw_t,\\
dy_t=&\left[-a_2(t)- b_2 (t) y_t +\frac{c_2(t) x_t}{x_t+e(t) y_t} \right]y_tdt\\
&+[\rho_1 (t)+\rho_2 (t) y_t ]y_t dw_t.
\end{aligned}
\end{cases}
\end{equation}
The white noise here is effected by one-dimensional Brownian motion  $\{w_t, t\geq 0 \}$ defined on the complete probability space  $(\Omega, \mathcal F, \mathcal F_t, \mathbb P)$ with filtration $\{\mathcal F_t\}_{t\geq 0}$ satisfying the usual conditions. Throughout this paper, we suppose that all coefficients of deterministic part of \eqref{E4} are bounded by some positive constants, i.e., their infimum and supremum on $t\geq 0$ are positive, and assume that $\sigma_i, \rho_i \, (i=1,2)$ are nonnegative bounded functions.

The first aim of this paper is to study the existence, uniqueness and positivity of solutions of system \eqref{E4} by using some Lyapunov functions. The next aim is investigating  the boundedness of moments. And the final one is to show some upper-growth rates as well as showing the exponential death rates of species under some conditions. 

The paper is organized as follows. In Section 2, we prove the  existence, uniqueness and  positivity of solutions. Section 3 produces asymptotic estimations of moments. In section 4,  the upper-growth rates and exponential death rates of species are studied.
\section{Global solution}
In this section, we show that the solution of system \eqref{E4} is positive and global. Denote $\mathbb R^n_+=\{(x_1,\cdots,x_n) \in \mathbb R^n\,{:}\, x_i>0 \,\,(i\geq1)\}, \,\, \mathbb R^n_{+0}=\{(x_1,\cdots,x_n) \in \mathbb R^n\,{:}\, x_i\geq 0\,\,(i\geq1)\}\,\, (n=1,2).$ Let $g(t)$ be a function, for a
brevity, instead of writing $g(t)$ we write $g$. 
If $g$ is bounded continuous function on $\mathbb R_{+0}$, we denote 
$$g^u= \sup_{t \in \mathbb R_{+0}}\ g(t), \ g^l=\inf_{t \in \mathbb R_{+0}} g(t).$$
 We have the following theorem.
\begin{theorem} \label{T1}
For any given initial value $(x_0,y_0) \in \mathbb R^2_{+},$ there is a unique solution $(x_t, y_t)$ to \eqref{E4} for $t\geq 0$. Further, with probability one, $\mathbb R^2_+$ is positively invariant for \eqref{E4}, i.e.,   $(x_t, y_t)\in \mathbb R^2_+$ a.s. for all $t\geq 0,$ if $(x_0, y_0)\in \mathbb R^2_+$.
\end{theorem}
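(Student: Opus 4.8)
The plan is to combine the classical local theory for stochastic differential equations with a single Lyapunov function that simultaneously rules out explosion and exit from the positive quadrant. Since all deterministic coefficients are bounded and continuous in $t$ while the functional responses $(x,y)\mapsto\frac{c_1(t)y}{x+e(t)y}$ and $(x,y)\mapsto\frac{c_2(t)x}{x+e(t)y}$ are smooth on the open quadrant $\mathbb R^2_+$ (the denominator being strictly positive there, as $e^l>0$), the drift and diffusion coefficients of \eqref{E4} are locally Lipschitz and locally bounded on $\mathbb R^2_+$. Hence, by the standard existence-and-uniqueness theorem (see, e.g., \cite{IS}), for every $(x_0,y_0)\in\mathbb R^2_+$ there is a unique maximal strong solution $(x_t,y_t)$ on a random interval $[0,\tau_e)$ with $(x_t,y_t)\in\mathbb R^2_+$ for all $t<\tau_e$ a.s.; in particular positivity is automatic, and it only remains to prove $\tau_e=\infty$ a.s. For this, fix $n_0\in\mathbb N$ so large that $1/n_0<\min\{x_0,y_0\}\le\max\{x_0,y_0\}<n_0$, and for $n\ge n_0$ define
\[
\tau_n=\inf\{t\in[0,\tau_e):\ x_t\notin(1/n,n)\ \text{or}\ y_t\notin(1/n,n)\},
\]
which increase to some $\tau_\infty\le\tau_e$. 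It suffices to show $\mathbb P(\tau_\infty\le T)=0$ for every $T>0$, since then $\tau_\infty=\tau_e=\infty$ a.s.

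The heart of the argument is a Lyapunov estimate. I would take, for a fixed exponent $\theta\in(0,1)$ (say $\theta=\tfrac12$),
\[
V(x,y)=\big(x^{\theta}-1-\theta\ln x\big)+\big(y^{\theta}-1-\theta\ln y\big),\qquad(x,y)\in\mathbb R^2_+ ,
\]
which is $C^2$ and nonnegative on $\mathbb R^2_+$ (each bracket has minimum value $0$ at argument $1$) and satisfies $V(x,y)\to\infty$ whenever a coordinate tends to $0$ (via $-\ln$) or to $+\infty$ (via $x^{\theta}$). Applying It\^o's formula to $V(x_t,y_t)$ on $[0,\tau_n]$ gives $dV(x_t,y_t)=\mathcal L V(t,x_t,y_t)\,dt+(\text{local-martingale term})$, and since $V$ is separable and \eqref{E4} is driven by a single Brownian motion, $\mathcal L V$ splits into the two one-dimensional generators acting on the two brackets. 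The key claim is that $\mathcal L V(t,x,y)\le K$ for all $t\ge0$ and $(x,y)\in\mathbb R^2_+$, for some constant $K$. This rests on: (i) all deterministic coefficients lie between their positive infima and finite suprema; (ii) the functional responses are bounded, $0\le\frac{c_1(t)y}{x+e(t)y}\le\frac{c_1^u}{e^l}$ and $0\le\frac{c_2(t)x}{x+e(t)y}\le c_2^u$, so the interaction terms contribute to $\mathcal L V$ only a one-variable, at most $x^{\theta}$ (resp. $y^{\theta}$) growth — this is exactly where the ratio-dependent response is more convenient than a bilinear Lotka--Volterra one and lets us avoid weighting the two brackets; (iii) for large $x$ the self-regulation contributes $-\theta b_1(t)x^{\theta+1}$ while the It\^o correction contributes $\tfrac{\theta(\theta-1)}{2}x^{\theta}(\sigma_1(t)+\sigma_2(t)x)^2+\tfrac{\theta}{2}(\sigma_1(t)+\sigma_2(t)x)^2$, whose first summand is $\le0$ (because $\theta<1$) and of order $x^{\theta+2}$, hence dominates the only unbounded positive term $\tfrac{\theta}{2}(\sigma_1(t)+\sigma_2(t)x)^2$, of order $x^{2}$; near $x=0$ every term of $\mathcal L V$ stays bounded. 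The same analysis applies to the $y$-bracket, giving the asserted uniform bound.

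Granting $\mathcal L V\le K$, the local-martingale term stopped at $\tau_n\wedge T$ is a true martingale (the state lies in the compact box $[1/n,n]^2$ on $[0,\tau_n]$, so the integrand is bounded), so taking expectations yields, for all $T>0$ and $n\ge n_0$,
\[
\mathbb E\,V\big(x_{\tau_n\wedge T},\,y_{\tau_n\wedge T}\big)\le V(x_0,y_0)+KT .
\]
On $\{\tau_n\le T\}$, at time $\tau_n$ one coordinate equals $n$ or $1/n$, so $V(x_{\tau_n},y_{\tau_n})\ge\mu_n:=\min\{n^{\theta}-1-\theta\ln n,\ n^{-\theta}-1+\theta\ln n\}$, and $\mu_n\to\infty$ as $n\to\infty$. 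Hence
\[
\mathbb P(\tau_n\le T)\le\frac{V(x_0,y_0)+KT}{\mu_n}\longrightarrow 0\qquad(n\to\infty),
\]
so $\mathbb P(\tau_\infty\le T)=0$ for every $T$, i.e. $\tau_e=\infty$ a.s. Combined with the local theory, this produces the unique global solution, which stays in $\mathbb R^2_+$ for all $t\ge0$ a.s.

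The step I expect to be the main obstacle is the uniform bound $\mathcal L V\le K$: one must check that, for large populations, the concavity gain coming from $\theta<1$ in the It\^o correction genuinely beats the super-linear, state-dependent diffusion coefficients $\sigma_2(t)x\cdot x$ and $\rho_2(t)y\cdot y$, while nothing blows up as $x\downarrow0$ or $y\downarrow0$. Indeed, it is precisely the choice of a Lyapunov exponent strictly below $1$ that makes the estimate go through without any smallness assumption on $\sigma_2,\rho_2$; the remaining ingredients — the bounds on the functional responses and the time-dependent coefficients, and elementary calculus inequalities for $x^{\theta}$ and $\ln x$ — are routine.
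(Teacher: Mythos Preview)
Your proposal is correct and follows essentially the paper's approach: the same Lyapunov function $V(x,y)=x^{\theta}-\theta\ln x+y^{\theta}-\theta\ln y+\text{const}$ (the paper allows distinct exponents $\theta_1,\theta_2$), the same uniform bound $\mathcal L V\le K$ exploiting $\theta<1$ and the boundedness of the ratio-dependent responses, and the same stopping-time localization argument. The only cosmetic difference is that the paper handles local existence by first passing to logarithmic coordinates $(\xi,\eta)=(\ln x,\ln y)$, where the transformed system has locally Lipschitz coefficients on all of $\mathbb R^2$, rather than invoking the local theory directly on the open quadrant as you do.
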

\begin{proof}
Since some coefficients of \eqref{E4} are not locally Lipschitz continuous, we can not say that there is a unique local solution $(x_t, y_t)$ to \eqref{E4}. However, we consider the following system
\begin{equation} \label{E4.1.1}
\begin{cases}
\begin{aligned}
d\xi_t= &\left[a_1-\frac{1}{2}(\sigma_1+\sigma_2 \exp\{\xi_t\})^2-b_1 \exp\{\xi_t\}-\frac{c_1\exp\{\eta_t\}}{\exp\{\xi_t\}+e \exp\{\eta_t\}}\right] dt\\
&+(\sigma_1+ \sigma_2 \exp\{\xi_t\}) dw_t,\\
d\eta_t=&\left[-a_2-\frac{1}{2}(\rho_1+\rho_2 \exp\{\eta_t\})^2-b_2 \exp\{\eta_t\}+\frac{c_2 \exp\{\xi_t\}}{\exp\{\xi_t\}+e \exp\{\eta_t\}} \right]dt\\
&+(\rho_1+\rho_2 \exp\{\eta_t\}) dw_t,
\end{aligned}
\end{cases}
\end{equation}
with an initial value $(\xi_0, \eta_0)=(\ln x_0, \ln y_0).$ Since the coefficients of \eqref{E4.1.1} are locally Lipschitz continuous, there is a unique local solution $(\xi_t,\eta_t)$ to \eqref{E4.1.1} for $t\in [0,\tau),$ where $\tau$ is the explosion time (Arnold \cite{A} or Friedman \cite{F}). Therefore, by It\^o's formula, $(x_t, y_t)=( \exp\{\xi_t\},  \exp\{\eta_t\})$ is the unique positive local solution to \eqref{E4}  for  $t\in [0,\tau)$  with the initial value $(x_0, y_0).$ To show the solution is global, we need to show that $\tau=\infty$ a.s.  We use the technique of localization dealt with in \cite{IS,M}. Let $k_0>0$ be sufficiently large for $x_0$ and $y_0$ lying within the interval $[\frac{1}{k_0},k_0]$. Let us define a sequence of stopping times \cite [Problem 2.7, p.7]{IS} for each integer $k\geq k_0$ by
$$\tau_k=\inf \left\{t\geq 0 \,{:}\, x_t\notin (\frac{1}{k},k) \text{ or } y_t\notin (\frac{1}{k},k)\right\}$$
(with the convention $\inf\emptyset=\infty$).  Since $\tau_k$ is nondecreasing as $k \rightarrow \infty$, there exists the limit $\tau_{\infty}=\lim_{k \rightarrow \infty} \tau_k$. Then $\tau_{\infty}\leq \tau$ a.s. Now, we will show that $\tau_{\infty}=\infty$ a.s. If this statement is false, then there exist $T>0$ and $\varepsilon \in (0,1)$ such that $\mathbb P\{\tau_{\infty}\leq T\}>\varepsilon.$ Thus, by denoting $\Omega_k=\{\tau_k \leq T\},$ there exists $k_1\geq k_0$ such that 
\begin{equation} \label{E5}
\mathbb P(\Omega_k)\geq \varepsilon \hspace{1cm}  \text{ for all } k\geq k_1.
\end{equation}
Let $\theta_i\in (0,1)\, (i=1,2).$ We consider the following function 
$$V(x,y)=x^{\theta_1}-\ln x+ y^{\theta_2}-\ln y-\sum_{i=1}^2\frac{1+\ln \theta_i}{\theta_i}\cdot$$
 Because $x^{\theta_1}-\ln x- \frac{1+\ln \theta_1}{\theta_1}\geq 0 $ for all $ x> 0,$ we have $V\in C^2 (\mathbb R^2_+,\mathbb R_{+0}).$ If $(x_t,y_t) \in \mathbb R^2_+,$ by using It\^o's formula, we get
\begin{equation} \label{E6}
dV(x_t,y_t)=f(x_t,y_t,t)dt+g(x_t,y_t,t) dw_t,
\end{equation}
where
\begin{equation} \label{E7}
\begin{aligned}
g(x,y,t)=&(\theta_1 x^{\theta_1}-1)(\sigma_1+\sigma_2 x)+(\theta_2 y^{\theta_2}-1)(\rho_1+\rho_2 y),\\
f(x,y,t)=&(\theta_1 x^{\theta_1}-1)\left[ a_1-b_1x-\frac{c_1 y}{ x+e y}\right]\\
&+(\theta_2 y^{\theta_2}-1)\left[- a_2-b_2y-\frac{c_2 x}{ x+e y}\right]\\
&+\frac{1}{2}[\theta_1(\theta_1-1)x^{\theta_1}+1](\sigma_1+\sigma_2 x)^2\\
&+\frac{1}{2}[\theta_2(\theta_2-1)y^{\theta_2}+1](\rho_1+\rho_2 y)^2.
\end{aligned}
\end{equation}
It is easy to see  from $\theta_i\in (0,1)$ and from  \eqref{E7} that the function $f(x,y,t)$ is bounded above, say by $M$, in $\mathbb R^2_+  \times \mathbb R_{+0}$.  It then follows from  $(x_{t\wedge \tau_k},y_{t\wedge\tau_k})\in \mathbb R^2_+$ and from  \eqref{E6} that
$$\int_0^{T\wedge \tau_k}dV(x_t,y_t)\leq \int_0^{T\wedge \tau_k} M dt+ \int_0^{T\wedge \tau_k} g(x_t,y_t,t) dw_t.$$
Taking expectations yields
\begin{equation} \label{E8}
\mathbb E V(x_{T\wedge \tau_k},y_{T\wedge \tau_k}) \leq V(x_0, y_0) + M \mathbb E (T\wedge \tau_k) \leq V(x_0, y_0)+ MT.
\end{equation}
On the other hand, for every $\omega\in \Omega_k,$ either $x_{\tau_k}(\omega)$  or $y_{\tau_k}(\omega)$ belongs to the set $\{k, \frac{1}{k}\}$. Then
\begin{align*}
V(x_{T\wedge \tau_k}(\omega),y_{T\wedge \tau_k}(\omega)) \geq &\min\Big\{k^{\theta_i}-\ln k- \frac{1+\ln \theta_i}{\theta_i},\\
& \frac{1}{k^{\theta_i}}-\ln \frac{1}{k}- \frac{1+\ln \theta_i}{\theta_i}\quad  (i=1,2)\Big\} \\
=&\min\Big\{k^{\theta_i}-\ln k- \frac{1+\ln \theta_i}{\theta_i},\\
& \frac{1}{k^{\theta_i}}+\ln k- \frac{1+\ln \theta_i}{\theta_i}\quad  (i=1,2)\Big\}.
\end{align*}
We therefore get from \eqref{E5} that
\begin{align*}
\mathbb E V(x_{T\wedge \tau_k},y_{T\wedge\tau_k})\geq &\mathbb E [1_{\Omega_k}V(x_{T\wedge \tau_k},y_{T\wedge \tau_k})]\\
\geq &\varepsilon \min\Big\{k^{\theta_i}-\ln k- \frac{1+\ln \theta_i}{\theta_i},\\
& \frac{1}{k^{\theta_i}}+\ln k- \frac{1+\ln \theta_i}{\theta_i}\quad  (i=1,2)\Big\}.
\end{align*}
It then follows from \eqref{E8} that 
$$V(x_0, y_0)+ MT\geq \varepsilon \min \Big\{k^{\theta_i}-\ln k- \frac{1+\ln \theta_i}{\theta_i}, \frac{1}{k^{\theta_i}}+\ln k- \frac{1+\ln \theta_i}{\theta_i} \quad  (i=1,2)\Big\}.$$
Letting $k \rightarrow \infty$ leads to  $\infty > V(x_0, y_0)+ MT =\infty.$ This is a contradiction. Therefore 
$\tau_{\infty}=\infty$ a.s. Then $\tau=\infty$ a.s., and $(x_t, y_t) \in \mathbb R^2_+$ a.s. The proof is complete.
\end{proof}
\section{Boundedness of moments}
In this section, we shall show the boundedness of species's quantity moments.   In order to capture them, we shall distinguish between the following hypotheses of random perturbation.

(H1) The random factor make only effects on the growth rate of population, i.e., $\sigma_2(t)=\rho_2 (t)=0$ for all $t\geq 0$ and $\sigma_1^l,\rho_1^l>0;$ 

(H2) The random factor make  effects on both the growth rate of population and the inhibiting effects of environment, i.e., $\sigma_i^l, \rho_i^l>0\,\, (i=1,2).$\\
Consider 
\begin{equation*}
\begin{aligned}
LV(x,y)=&\frac{1}{2} [\sigma_1 (t)+\sigma_2 (t)x]^2 x^2\frac{\partial^2 V}{\partial x^2}+\frac{1}{2}  [\rho_1 (t)+\rho_2 (t)y]^2 y^2\frac{\partial^2 V}{\partial y^2}\\
&+ [\sigma_1 (t)+\sigma_2 (t)x][\rho_1 (t)+\rho_2 (t)y] xy  \frac{\partial^2 V}{\partial x \partial y}\\
& +f_1(x,y,t) \frac{\partial V}{\partial x}+f_2(x,y,t) \frac{\partial V}{\partial y},
\end{aligned}
\end{equation*}
the infinitesimal operator of \eqref{E4}, defined on the space $C^2(\mathbb R^2_+,\mathbb R),$ where
$$f_1(x,y,t)=a_1x-\frac{c_1 xy}{x+e y}-b_1 x^2,$$ 
$$ f_2(x,y,t)=-a_2 y+\frac{c_2 xy}{ x+e y} - b_2 y^2,$$
and for any positive numbers  $\theta_1, \theta_2$, we put
\begin{align} \label{E9}
d_2&=\min\{ \theta_ib_i^l\,\,\,  (i=1,2)\}, \quad \theta=\frac{1}{\theta_1+\theta_2}, \notag \\
d_1&=\sup_{t\geq 0}\Big\{\frac{1}{2}\sigma^2_1(t)\theta_1(\theta_1-1)+\frac{1}{2}\rho^2_1(t)\theta_2(\theta_2-1)+\sigma_1(t)\rho_1(t)\theta_1\theta_2+\theta_1 a_1(t)\notag\\
&+[c_2(t)- a_2(t)]\theta_2\Big\},\\
\lambda_1&=\frac{d_1}{d_2}-(\theta_1+\theta_2)\{1-\ln (\theta_1+\theta_2)\}, \notag\\
\lambda_2&(x_0,y_0)=(\theta_1 \ln x_0+\theta_2 \ln y_0)+(\theta_1+\theta_2)\{1-\ln(\theta_1+\theta_2)\}-\frac{d_1}{d_2}\cdot \notag
\end{align}
We have the following theorems.
\begin{theorem} \label {T2}
Under condition {\rm(H1)}, for any  positive  $\theta_1, \theta_2$ and  initial value $(x_0,y_0)\in \mathbb R^2_+,$ solution  of \eqref{E4} satisfies
$$\mathbb E (x^{\theta_1}_ty^{\theta_2}_t) \leq \exp\{\lambda_1+\lambda_2(x_0,y_0) \exp\{-d_2t\}\}\hspace{1cm} \text{ for all } t\geq 0.$$
Consequently, $\limsup_{t\rightarrow \infty} \mathbb E (x^{\theta_1}_ty^{\theta_2}_t)\leq \exp\{\lambda_1\}. $
\end{theorem}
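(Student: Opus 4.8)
The plan is to evaluate the infinitesimal generator $L$ of \eqref{E4} (with $\sigma_2=\rho_2=0$, as imposed by (H1)) on the single smooth Lyapunov function $V(x,y)=x^{\theta_1}y^{\theta_2}$ on $\mathbb R^2_+$, reduce the result to a Gompertz-type scalar differential inequality for $p(t):=\mathbb E\,V(x_t,y_t)$, and then integrate it explicitly. First I would compute, using $V_x=\theta_1 V/x$, $V_{xx}=\theta_1(\theta_1-1)V/x^2$, $V_{xy}=\theta_1\theta_2 V/(xy)$ and the analogous $y$-derivatives,
$$LV=V\Big[\frac12\sigma_1^2\theta_1(\theta_1-1)+\frac12\rho_1^2\theta_2(\theta_2-1)+\sigma_1\rho_1\theta_1\theta_2+\theta_1 a_1-\theta_2 a_2-\frac{\theta_1 c_1 y}{x+ey}+\frac{\theta_2 c_2 x}{x+ey}-\theta_1 b_1 x-\theta_2 b_2 y\Big].$$
Discarding the nonpositive term $-\theta_1 c_1 y/(x+ey)$, bounding $\theta_2 c_2 x/(x+ey)\le\theta_2 c_2$ since $0\le x/(x+ey)\le 1$, and using $\theta_1 b_1 x+\theta_2 b_2 y\ge d_2(x+y)$, the bracket is at most $d_1-d_2(x+y)$ by the definition of $d_1$ in \eqref{E9}; hence $LV\le V[d_1-d_2(x+y)]$ on $\mathbb R^2_+$.

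The key step is the elementary inequality: for all $x,y>0$,
$$x+y-\theta_1\ln x-\theta_2\ln y\ \ge\ (\theta_1+\theta_2)\big[1-\ln(\theta_1+\theta_2)\big],$$
which I would prove by minimising the left-hand side separately in $x$ (minimum $\theta_1-\theta_1\ln\theta_1$ at $x=\theta_1$) and in $y$, and then invoking $\theta_1\ln\theta_1+\theta_2\ln\theta_2\le(\theta_1+\theta_2)\ln(\theta_1+\theta_2)$, itself a consequence of the monotonicity of $\ln$ and $\theta_i>0$. Since $\theta_1\ln x+\theta_2\ln y=\ln V$, this yields $x+y\ge\ln V+(\theta_1+\theta_2)[1-\ln(\theta_1+\theta_2)]$, and substituting into the previous estimate gives the clean bound $LV\le d_2\,V\,(\lambda_1-\ln V)$, because $d_1/d_2-(\theta_1+\theta_2)[1-\ln(\theta_1+\theta_2)]=\lambda_1$.

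I would then pass to expectations. With the localising stopping times $\tau_k$ from the proof of Theorem~\ref{T1}, It\^o's formula gives $\mathbb E\,V(x_{t\wedge\tau_k},y_{t\wedge\tau_k})=V(x_0,y_0)+\mathbb E\int_0^{t\wedge\tau_k}LV\,ds$; since $v\mapsto d_2 v(\lambda_1-\ln v)$ is bounded above, this first yields $p(t)<\infty$, and then, letting $k\to\infty$ (Fatou on the left, monotone convergence applied to the nonnegative quantity $K-LV$, where $K$ bounds $d_2 v(\lambda_1-\ln v)$ from above, on the right), $p(t)\le V(x_0,y_0)+\int_0^t d_2\,\mathbb E[V_s(\lambda_1-\ln V_s)]\,ds$. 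Because $v\mapsto v(\lambda_1-\ln v)$ is concave, Jensen's inequality gives $\mathbb E[V_s(\lambda_1-\ln V_s)]\le p(s)(\lambda_1-\ln p(s))$, so $p$ satisfies $p'(t)\le d_2\,p(t)(\lambda_1-\ln p(t))$ (after differentiation, or by comparison directly on the integral inequality). Setting $q=\ln p$ this becomes the linear inequality $q'+d_2 q\le d_2\lambda_1$; integrating with the factor $e^{d_2 t}$ and using $q(0)=\theta_1\ln x_0+\theta_2\ln y_0$ together with $\theta_1\ln x_0+\theta_2\ln y_0-\lambda_1=\lambda_2(x_0,y_0)$ gives $q(t)\le\lambda_1+\lambda_2(x_0,y_0)e^{-d_2 t}$, which is the claimed bound. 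The final $\limsup$ follows at once since $d_2=\min\{\theta_1 b_1^l,\theta_2 b_2^l\}>0$, so $e^{-d_2 t}\to 0$.

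The generator computation and the logarithmic inequality are routine; the main obstacle is the last paragraph — turning the stopped It\^o identity, whose integrand $LV$ is bounded above but not below after localisation, into the clean scalar inequality for $p(t)=\mathbb E\,V(x_t,y_t)$, and carrying out the Jensen/concavity step together with the required integrability justifications.
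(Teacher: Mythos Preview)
Your argument is correct but is organised differently from the paper's. Both proofs start from the same computation $LV\le V[d_1-d_2(x+y)]$. The paper then bounds $x+y\ge V^\theta$ with $\theta=1/(\theta_1+\theta_2)$, passes to expectations using H\"older's inequality $(\mathbb E V)^{1+\theta}\le \mathbb E[V^{1+\theta}]$ (which in turn requires the auxiliary finiteness $\mathbb E[V^{1+\theta}]<\infty$, proved separately), obtains $D^+v\le v[d_1-d_2 v^\theta]$ for $v(t)=\mathbb E V$, and only at the ODE level invokes the scalar inequality $\ln x-x^\theta\le -\frac{1}{\theta}(1+\ln\theta)$. You instead combine these two inequalities into the single bound $x+y\ge \ln V+(\theta_1+\theta_2)[1-\ln(\theta_1+\theta_2)]$ \emph{before} taking expectations, which yields directly $LV\le d_2V(\lambda_1-\ln V)$; the passage to $p=\mathbb E V$ then uses Jensen for the concave map $v\mapsto v(\lambda_1-\ln v)$ and produces an ODE that is already linear in $\ln p$. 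Your route is slightly more economical (no higher-moment bound is needed, since $v(\lambda_1-\ln v)$ is bounded above), while the paper's route keeps the Jensen/H\"older step in the more familiar power form; the two are essentially reorderings of the same estimates.
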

\begin{proof}
Firstly, we prove that
\begin{equation} \label{E10.0}
\mathbb E (x^{\theta_1}_ty^{\theta_2}_t)< \infty \hspace{1cm}\text{  for all } t\geq 0  \text{ and  } \theta_i>0.
\end{equation}
Define a function $V\in C^2 (\mathbb R^2_+,\mathbb R_+)$ by $V(x,y)=x^{\theta_1}y^{\theta_2}.$ For any $t\geq 0$, using It\^o's formula gives that
\begin{equation} \label{E10}
dV(x_t,y_t)=LV(x_t,y_t)dt+(\theta_1\sigma_1+\theta_2\rho_1)V(x_t,y_t)dw_t.
\end{equation}
It is easy to see that
\begin{equation} \label {E11}
\begin{aligned}
LV(x,y)=&\Big[\frac{1}{2}\theta_1(\theta_1-1)\sigma_1^2+\frac{1}{2}\theta_2(\theta_2-1)\rho_1^2+\theta_1\theta_2\sigma_1\rho_1+\theta_1(a_1-b_1x) \\
&-\theta_2(a_2+b_2 y)+\frac{\theta_2c_2 x-\theta_1c_1y}{x+e y}\Big]V(x,y)\\
\leq &[d_1-d_2 (x+y)] V(x,y).
\end{aligned}
\end{equation}
For every integer $k\geq 1$, we define a stopping time $\tau_k=\inf\{t\geq 0 \,{:}\,  x_t+y_t \geq k\}.$ Then, the sequence $\{\tau_k, k\geq 1\}$ is nondecreasing  and  by the positive invariance of $(x_t,y_t)$ on $\mathbb R^2_+$, we have $\lim_{k\rightarrow \infty }\tau_k=\infty$ a.s. It then follows from \eqref{E10} that
\begin{align*}
V(x_{t\wedge \tau_k}, y_{t\wedge \tau_k})=&V(x_0,y_0)+\int_0^{t\wedge\tau_k}LV(x_s,y_s)ds\\
&+\int_0^{t\wedge \tau_k}[\theta_1\sigma_1(s)+\theta_2\rho_1(s)]V(x_s,y_s)dw_s.
\end{align*}
Taking expectations of both sides and using \eqref{E11}, we have
\begin{align*}
\mathbb E V(x_{t\wedge \tau_k}, y_{t\wedge \tau_k})& \leq V(x_0,y_0)+d_1 \mathbb E \int_0^{t\wedge \tau_k}V(x_s,y_s)ds\\
& \leq V(x_0,y_0)+d_1  \int_0^t\mathbb E V(x_{s\wedge \tau_k},y_{s\wedge \tau_k} )ds.
\end{align*}
Thus, by using  Gronwall's inequality, we obtain $$\mathbb E V(x_{t\wedge\tau_k}, y_{t\wedge \tau_k})\leq V(x_0,y_0) \exp\{d_1t\}.$$
Letting $k\rightarrow \infty$ in the latter inequality, we yield that, for all $t\geq 0,$ $\mathbb E V(x_t,y_t)\leq V(x_0,y_0) \exp\{d_1t\},$  from which we deduce \eqref{E10.0}.

Next, since $V(x,y)=x^{\theta_1}y^{\theta_2}\leq (x+y)^{\theta_1+\theta_2}, $ we have  $x+y \geq V^{\theta}(x,y).$ It then follows from \eqref{E11} that 
\begin{equation} \label{E12}
LV(x,y)\leq [d_1-d_2 V^{\theta}(x,y)] V(x,y).
\end{equation}
Applying  \eqref{E10.0} to $(\theta_1(1+\theta), \theta_2(1+\theta))$, we have 
$$\mathbb E \left[V^{1+\theta}(x_t,y_t)\right]=\mathbb E \left[x^{\theta_1(1+\theta)}_ty^{\theta_2(1+\theta)}_t\right]<\infty \hspace{1cm}  \text{ for all } t\geq 0.$$
Then, by using H\"older's inequality,  yields
$$\left[\mathbb E V(x_t,y_t)\right]^{1+\theta}\leq \mathbb E\left[V^{1+\theta}(x_t,y_t)\right].$$
It then follows from \eqref{E10} and from  \eqref{E12} that, for any $t\geq 0$ and $h>0$,
\begin{align} \label{E13}
\mathbb E V(x_{t+h}, y_{t+h})-\mathbb E V(x_t,y_t)&\leq \int_t^{t+h}\left[ d_1\mathbb E V(x_s,y_s)-d_2 \mathbb E V^{1+\theta}(x_s,y_s)\right]ds \notag\\
&\leq  \int_t^{t+h}\left[ d_1\mathbb E V(x_s,y_s)-d_2 \left [\mathbb E V(x_s,y_s)\right]^{1+\theta}\right]ds.
\end{align}
Putting $v(t)=\mathbb E V(x_t,y_t),$ then $0<v(t)<\infty$ for all $t\geq 0$. Further, the continuity of $v(t)$ in $t$ can be seen by the continuity of the solution $(x_t, y_t)$ and the dominated convergence theorem. We define the right upper derivative of $v(t) $ by 
$$D^+v(t)=\limsup_{h \rightarrow 0} \frac{v(t+h)-v(t)}{h}\cdot$$
From \eqref{E13}, we have
$$\frac{v(t+h)-v(t)}{h}\leq \frac{1}{h}\int_t^{t+h}\left[ d_1v(s)-d_2 v^{1+\theta}(s)\right]ds.$$
Letting $h\rightarrow 0$ gives
$D^+v(t)\leq v(t)[d_1-d_2v^{\theta}(t)] \, \, \text{ for all } t\geq 0.$
Therefore,
\begin{align*}
D^+[\exp\{d_2t\} \ln v(t)]&=d_2 \exp\{d_2t\}  \ln v(t)+ \exp\{d_2t\}  \frac{D^+v(t)}{v(t)}\\
&\leq d_2 \exp\{d_2t\}  \ln v(t)+\exp\{d_2t\} [d_1-d_2 v^{\theta}(t)]\\
&=d_1 \exp\{d_2t\}+ d_2 \exp\{d_2t\} [\ln v(t)-v^{\theta}(t)].
\end{align*}
It is easy to see that  $\ln x-x^{\theta}\leq -\frac{1}{\theta}(1+\ln \theta) $ for all $x>0.$ Then
$$D^+[\exp\{d_2t\} \ln v(t)] \leq \left [d_1-\frac{1}{\theta}(1+\ln \theta)  d_2\right]\exp\{d_2t\}.$$
Taking integrations of both sides yields
\begin{align*}
\exp\{d_2t\} \ln v(t)\leq &\ln v(0)+\left [\frac{d_1}{d_2}-\frac{1}{\theta}(1+\ln \theta) \right]\left [\exp\{d_2t\}-1\right]\\
=&\lambda_2 +\lambda_1 \exp\{d_2t\}.
\end{align*}
Consequently, we have $\ln v(t)\leq \lambda_1+\lambda_2 \exp\{-d_2t\},$ from which follows the first statement of theorem.  Letting $t\to \infty$ in the latter inequality, we get the second one. The proof is complete.
\end{proof}
\begin{theorem} \label{T3}
Under condition {\rm(H2)}, for any $\theta_i \in (0,1], \varrho_i \in [0,3)$ and $ \varsigma_i \in \mathbb R_+,$  there exist positive constants $K_1=K_1(\theta_i, \varsigma_i)$ and $K_2=K_2(\varrho_i, \varsigma_i) \,(i=1,2)$ satisfying the following  for any initial value $(x_0, y_0) \in \mathbb R^2_+$
\begin{itemize}
\item [(i)]
$\limsup_{t\to \infty} \mathbb E \left[\varsigma_1x^{\theta_1}_t+\varsigma_2y^{\theta_2}_t\right] \leq K_1;$
\item [(ii)] $\limsup_{t\to \infty} \frac{1}{t} \int_0^t \mathbb E [\varsigma_1x^{\varrho_1}_s+\varsigma_2y^{\varrho_2}_s]ds \leq K_2.$
\end{itemize}
\end{theorem}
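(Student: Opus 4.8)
The plan is to treat both parts with a single kind of nonnegative Lyapunov function, namely the separable one $V(x,y)=\varsigma_1x^{\theta_1}+\varsigma_2y^{\theta_2}\in C^2(\mathbb R^2_+,\mathbb R_{+0})$, and to control its generator $LV$ by a polynomial-type estimate. The separable form is convenient because the mixed term $[\sigma_1+\sigma_2x][\rho_1+\rho_2y]xy\,V_{xy}$ in $LV$ vanishes, and the ratio-dependent terms cause no trouble thanks to $0\le\frac{y}{x+ey}\le\frac1e$ and $0\le\frac{x}{x+ey}\le1$: dropping the nonpositive term $-\varsigma_1c_1\theta_1\frac{y}{x+ey}x^{\theta_1}$ and replacing $\frac{c_2x}{x+ey}$ by $c_2^u$, the estimate decouples as $LV(x,y)\le Q_1(x)+Q_2(y)$. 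Using It\^o's formula and the $g^u,g^l$ notation, one gets for $\theta_1\in(0,1)$ that $Q_1(x)=\varsigma_1\theta_1a_1^ux^{\theta_1}+\frac{1}{2}\varsigma_1\theta_1(\theta_1-1)(\sigma_2^l)^2x^{\theta_1+2}$ (the remaining powers of $x$ arise with nonpositive coefficients, after expanding $(\sigma_1+\sigma_2x)^2$), while for $\theta_1=1$ the diffusion contribution to $LV$ vanishes and $Q_1(x)=\varsigma_1a_1^ux-\varsigma_1b_1^lx^2$; the term $Q_2(y)$ is symmetric with $(\rho_2^l)^2,b_2^l$ in place of $(\sigma_2^l)^2,b_1^l$. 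Recalling that $b_i^l>0$ always and that $\sigma_2^l,\rho_2^l>0$ under {\rm(H2)}, in every case $Q_1,Q_2$ have strictly negative leading coefficients.

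For part (i): with the prescribed $\theta_i\in(0,1]$ and $\varsigma_i>0$, the function $Q_1(x)+\varepsilon\varsigma_1x^{\theta_1}$ still has a negative leading coefficient, hence is bounded above on $\mathbb R_{+0}$, and likewise $Q_2(y)+\varepsilon\varsigma_2y^{\theta_2}$; so there are $\varepsilon>0$ and $K^*>0$ with $LV(x,y)\le K^*-\varepsilon V(x,y)$ on $\mathbb R^2_+$. Applying It\^o's formula to $e^{\varepsilon t}V(x_t,y_t)$, localising with $\tau_k=\inf\{t\ge 0: x_t+y_t\ge k\}$ (for which $\tau_k\uparrow\infty$ a.s. by Theorem \ref{T1}), noting the stochastic integral up to $t\wedge\tau_k$ is a true martingale, and letting $k\to\infty$ by Fatou's lemma, one obtains $\mathbb EV(x_t,y_t)\le e^{-\varepsilon t}V(x_0,y_0)+K^*/\varepsilon$. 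Letting $t\to\infty$ gives (i) with $K_1:=K^*/\varepsilon$.

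For part (ii): here $\theta_i$ is ours to choose, and the key point is that $Q_1(x)+\varsigma_1x^{\varrho_1}$ retains a negative leading coefficient as soon as $\theta_1+2>\varrho_1$, i.e. $\theta_1>\varrho_1-2$; since $\varrho_1<3$ we may take $\theta_1\in(\max\{0,\varrho_1-2\},1)$, and similarly $\theta_2\in(\max\{0,\varrho_2-2\},1)$. With this choice $Q_1(x)+\varsigma_1x^{\varrho_1}$ and $Q_2(y)+\varsigma_2y^{\varrho_2}$ are bounded above, so $LV(x,y)\le K^*-\varsigma_1x^{\varrho_1}-\varsigma_2y^{\varrho_2}$ for some $K^*>0$. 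Integrating $dV$ over $[0,t\wedge\tau_k]$, taking expectations (the stochastic integral drops out), using $V\ge0$, then letting $k\to\infty$ by monotone convergence and rewriting with Tonelli's theorem, one gets $\int_0^t\mathbb E[\varsigma_1x_s^{\varrho_1}+\varsigma_2y_s^{\varrho_2}]\,ds\le V(x_0,y_0)+K^*t$; dividing by $t$ and taking $\limsup$ yields (ii) with $K_2:=K^*$.

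The calculations are essentially bookkeeping: expanding $(\sigma_1+\sigma_2x)^2$ in It\^o's formula, collecting powers of $x$ and $y$, and checking signs of the coefficients. The one structural point — and the reason {\rm(H2)} rather than {\rm(H1)} is needed — is that to dominate $x^{\varrho_1}$ with $\varrho_1$ allowed up to (but not including) $3$ one must use the top-order term $\frac{1}{2}\varsigma_1\theta_1(\theta_1-1)\sigma_2^2x^{\theta_1+2}$ produced by the quadratic noise intensity, together with $\theta_1<1$; under {\rm(H1)} only the drift term $-\varsigma_1\theta_1b_1x^{\theta_1+1}$ is available and the argument would be confined to $\varrho_i<2$.
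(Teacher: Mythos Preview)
Your proposal is correct and follows essentially the same approach as the paper: the same separable Lyapunov function $V_1(x,y)=\varsigma_1x^{\theta_1}+\varsigma_2y^{\theta_2}$, the same bound $LV_1+V_1\le K_1$ combined with It\^o on $e^tV_1$ for part~(i), and the same choice of $\theta_i\in(0,1)$ with $\varrho_i<2+\theta_i$ together with $LV_1+V_2\le K_2$ for part~(ii). Your write-up is in fact slightly more explicit than the paper's (spelling out the decoupling $LV\le Q_1(x)+Q_2(y)$, handling $\theta_i=1$ separately, localising in part~(ii), and explaining why (H2) rather than (H1) is what allows $\varrho_i$ up to $3$), but the method is the same.
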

\begin{proof}
Consider a function $V_1\,{:}\, \mathbb R^2_+ \to \mathbb R^2_+$ defined by $V_1(x,y)=\varsigma_1x^{\theta_1}+\varsigma_2y^{\theta_2}.$ For any $t\geq 0$, by using It\^o's formula, we have
\begin{equation} \label{E14}
\begin{aligned}
dV_1(x_t,y_t)=&LV_1(x_t,y_t)dt+\Big\{ \theta_1 \varsigma_1[\sigma_1 (t)+\sigma_2 (t) x_t]x^{\theta_1}_t\\
&+\theta_2 \varsigma_2[\rho_1 (t)+\rho_2 (t) y_t]y^{\theta_2}_t\Big\}dw_t,
\end{aligned}
\end{equation}
where
\begin{equation} \label {E15}
\begin{aligned}
LV_1(x,y)=&\frac{1}{2}\theta_1(\theta_1-1) \varsigma_1 [\sigma_1 (t)+\sigma_2 (t) x]^2x^{\theta_1}\\
&+\frac{1}{2}\theta_2(\theta_2-1)  \varsigma_2 [\rho_1 (t)+\rho_2 (t) y]^2y^{\theta_2}\\
&+\theta_1\varsigma_1x^{\theta_1}\Big [a_1(t)-b_1 (t)x-\frac{c_1(t)y}{x+e(t) y}\Big]\\
&+\theta_2\varsigma_2y^{\theta_2}\Big [-a_2(t)+\frac{c_2(t)x}{ x+e(t) y}-b_2 (t)y\Big].\\
\end{aligned}
\end{equation}
Then, from $\theta_i \in (0,1]$ and from $ \varsigma_i \in \mathbb R_+ \, (i=1,2),$  there exists $K_1=K_1(\theta_1, \theta_2, \varsigma_1, \varsigma_2)$ such that $LV_1(x,y)+V_1(x,y)\leq K_1$ for all $(x,y,t)\in \mathbb R^2_+  \times \mathbb R_{+0}.$ Applying It\^o's formula yields
\begin{align} \label{E16}
d[e^tV_1(x_t,y_t)]=&e^t[V_1(x_t,y_t)+LV_1(x_t,y_t)]dt \notag\\
&+e^t[\theta_1 \varsigma_1(\sigma_1+\sigma_2 x_t)x^{\theta_1}_t+\theta_2 \varsigma_2(\rho_1+\rho_2 y_t)y^{\theta_2}_t)]dw_t \notag\\
\leq & K_1 e^t dt+e^t[\theta_1 \varsigma_1(\sigma_1+\sigma_2 x_t)x^{\theta_1}_t+\theta_2 \varsigma_2(\rho_1+\rho_2 y_t)y^{\theta_2}_t)]dw_t. 
\end{align}
Using the sequence of stopping times $\{\tau_k\}_{k=1}^{\infty} $ defined in the proof of Theorem \ref{T1} and from  \eqref{E16}, we have
$$\mathbb E \left[e^{t\wedge \tau_k}V_1(x_{t\wedge \tau_k},y_{t\wedge \tau_k})\right]\leq V_1(x_0,y_0)+ K_1(\mathbb E e^{t\wedge \tau_k}-1) \hspace{1cm}\text{ for all } t\geq 0.$$
Letting $k\rightarrow \infty$ in the latter inequality with a fact that  $V(x_{t\wedge\tau_k}, y_{t\wedge \tau_k})>0$ and $0<e^{t\wedge\tau_k}\leq e^{t} \, a.s.$, and using Fatou's lemma, we obtain
$$e^t\mathbb E V_1(x_t,y_t)\leq V_1(x_0,y_0)+ K_1(e^t-1).$$ Therefore, 
$\limsup_{t\to \infty} \mathbb E V_1(x_t,y_t)\leq K_1.$

To prove Part (ii), we consider a function $V_2(x,y)=\varsigma_1x^{\varrho_1}+\varsigma_2y^{\varrho_2}.$  Since $ \varrho_i \in [0,3),$ there exist $\theta_i \in (0,1)\, (i=1, 2)$ such that $0\leq \varrho_i<2+\theta_i.$
Then, from  \eqref{E15}  there exists $K_2=K_2(\varrho_i, \varsigma_i)$ such that 
$LV_1(x,y)+ V_2(x,y)\leq K_2$  for all $ (x,y,t)\in \mathbb R^2_+  \times \mathbb R_{+0}.$
Using \eqref{E14} gives
\begin{align*}
V_1(x_t,y_t)\leq &V_1(x_0,y_0)+ \int_0^t [K_2-V_2(x_s,y_s)] ds\\
& + \int_0^t [\theta_1 \varsigma_1(\sigma_1+\sigma_2 x_s)x^{\theta_1}_s+\theta_2 \varsigma_2(\rho_1+\rho_2 y_s)y^{\theta_2}_s)]dw_s.
\end{align*}
Taking expectations of both sides, we obtain
$$\mathbb E V_1(x_t,y_t)+\int_0^t \mathbb E V_2(x_s,y_s) ds \leq V_1(x_0,y_0)+ K_2 t,$$
from which follows $\int_0^t \mathbb E V_2(x_s,y_s) ds \leq V_1(x_0,y_0)+ K_2 t.$ Therefore,
$$\limsup_{t\to \infty} \frac{1}{t} \int_0^t \mathbb E [\varsigma_1x^{\varrho_1}_s+\varsigma_2y^{\varrho_2}_s]ds\leq K_2.$$
\end{proof}
\section{Upper growth rate estimation}
In this section, we shall show the upper-growth rates  of population under  the case of the random factor making the effect only on the growth rate of population.
\begin{theorem} \label {T4}
Under condition {\rm(H1)}, for any  $ \theta_i\geq 0$ and any initial value $(x_0, y_0) \in \mathbb R^2_+,$ 
$$\limsup_{t\to \infty} \frac{\ln\left[x_t^{\theta_1}y_t^{\theta_2}\right]}{\ln t}\leq \theta_1+\theta_2   \hspace{1cm} a.s.$$
Furthermore, if $ \theta_i\in [0,1) $ then for any $\varsigma_i >0 \, (i=1,2)$ there exists $K=K( \theta_i, \varsigma_i )$ such that
$$\limsup_{t\to \infty} \frac{1}{t} \int_0^t (\varsigma_1 x^{\theta_1}_s+\varsigma_2 y^{\theta_2}_s)ds \leq K  \hspace{1cm} a.s.$$
\end{theorem}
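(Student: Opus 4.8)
The plan is to treat the two species separately. For $t>1$ one has $\ln[x_t^{\theta_1}y_t^{\theta_2}]/\ln t=\theta_1(\ln x_t/\ln t)+\theta_2(\ln y_t/\ln t)$, and since $\limsup(\theta_1 f+\theta_2 g)\le\theta_1\limsup f+\theta_2\limsup g$ for $\theta_1,\theta_2\ge0$, it suffices to show $\limsup_{t\to\infty}(\ln x_t/\ln t)\le1$ and $\limsup_{t\to\infty}(\ln y_t/\ln t)\le1$ a.s. I would carry this out for $x_t$; the argument for $y_t$ is identical with $a_1^u$ replaced by $c_2^u$ and $b_1$ by $b_2$. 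First, arguing as in the proof of Theorem~\ref{T2} (applied to $x$ and to $x^2$), $\sup_{t\ge0}\mathbb E x_t<\infty$ and $\sup_{t\ge0}\mathbb E x_t^2<\infty$. Next, applying It\^o's formula to $e^tx_t$ and using $x+f_1(x,y,t)\le(1+a_1^u)x-b_1^lx^2\le K$ for a suitable $K>0$, one obtains, for $t\ge s\ge0$,
\[
e^tx_t-e^sx_s\le K(e^t-e^s)+(M_t-M_s),\qquad M_t:=\int_0^te^r\sigma_1(r)x_r\,dw_r,
\]
and $M$ is a true $L^2$-martingale because $\sup_r\mathbb E x_r^2<\infty$.

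\textbf{From integer times to all $t$.} Taking $s=n$ and $t\in[n,n+1]$ above gives $x_t\le x_n+K+e^{-n}\sup_{n\le r\le n+1}|M_r-M_n|$. By Doob's $L^2$-inequality and $\mathbb E|M_{n+1}-M_n|^2\le(\sigma_1^u)^2e^{2(n+1)}\sup_r\mathbb E x_r^2$, the second moment of $e^{-n}\sup_{n\le r\le n+1}|M_r-M_n|$ is bounded by a constant independent of $n$. Now fix $\varepsilon>0$. Chebyshev's inequality yields $\mathbb P\{x_n>n^{1+\varepsilon}\}\le\mathbb E x_n/n^{1+\varepsilon}$ and $\mathbb P\{e^{-n}\sup_{n\le r\le n+1}|M_r-M_n|>n^{1+\varepsilon}\}\le\mathrm{const}/n^{2+2\varepsilon}$, both summable in $n$, so by the Borel--Cantelli lemma there is a.s.\ an $N(\omega)$ with $x_t\le 2n^{1+\varepsilon}+K$ for all $t\in[n,n+1]$, $n\ge N(\omega)$, hence $x_t\le\mathrm{const}\cdot t^{1+\varepsilon}$ for all large $t$. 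Therefore $\limsup_{t\to\infty}(\ln x_t/\ln t)\le1+\varepsilon$ a.s., and letting $\varepsilon\to0$ along a sequence gives the claim.

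\textbf{Second statement.} Set $V_1(x,y)=\varsigma_1x^{\theta_1}+\varsigma_2y^{\theta_2}$ with $\theta_i\in[0,1)$. Since $\theta_i(\theta_i-1)\le0$, in \eqref{E15} (with $\sigma_2=\rho_2=0$) the second-order terms of $LV_1$ are nonpositive, and all the $x^{\theta_1}$- and $y^{\theta_2}$-terms are dominated by $-\theta_1\varsigma_1b_1^lx^{\theta_1+1}-\theta_2\varsigma_2b_2^ly^{\theta_2+1}$; hence there is $K=K(\theta_i,\varsigma_i)>0$ with $LV_1(x,y)+V_1(x,y)\le K$ on $\mathbb R^2_+\times\mathbb R_{+0}$. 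By It\^o's formula (with the usual localisation) and this bound, after rearranging and discarding $-V_1(x_t,y_t)\le0$,
\[
\frac1t\int_0^tV_1(x_s,y_s)\,ds\le\frac{V_1(x_0,y_0)}{t}+K+\frac{N_t}{t},\qquad N_t:=\int_0^t\big[\theta_1\varsigma_1\sigma_1(s)x_s^{\theta_1}+\theta_2\varsigma_2\rho_1(s)y_s^{\theta_2}\big]\,dw_s.
\]
Because $2\theta_i\in[0,2)$, the moment estimates of Theorem~\ref{T2} make $\mathbb E x_s^{2\theta_1}$ and $\mathbb E y_s^{2\theta_2}$ bounded in $s$, so Fubini gives $\mathbb E\int_0^\infty(1+s)^{-2}\,d\langle N\rangle_s<\infty$, whence $\int_0^\infty(1+s)^{-2}\,d\langle N\rangle_s<\infty$ a.s. The strong law of large numbers for continuous local martingales (see, e.g., \cite{M}) then yields $N_t/t\to0$ a.s., and therefore $\limsup_{t\to\infty}\frac1t\int_0^t(\varsigma_1x_s^{\theta_1}+\varsigma_2y_s^{\theta_2})\,ds\le K$ a.s.

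\textbf{Main obstacle.} The delicate point in both parts is upgrading estimates ``in expectation'' or ``at integer times'' to statements holding a.s.\ for \emph{every} $t$. In the first part this is the control of the fluctuation of $x_t$ over each interval $[n,n+1]$, carried out with the correct $e^{2n}$-scaling of $\langle M\rangle$; in the second it is the introduction of the weight $(1+s)^{-2}$, which turns the merely linearly growing $\mathbb E\langle N\rangle_t$ into an a.s.\ finite integral and makes the martingale strong law applicable without any a priori pathwise time-average bound. Everything else is routine bookkeeping with the dissipative $-b_ix^2$ terms.
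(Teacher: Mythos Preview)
Your argument is correct in both parts, but it takes a genuinely different route from the paper.

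For the first inequality the paper works on the log-scale: writing $\xi_t=\ln x_t$, $\eta_t=\ln y_t$, it applies It\^o's formula to $e^{pt}\xi_t$ and $e^{pt}\eta_t$, controls the resulting martingales $\int_0^t\sigma_1 e^{ps}\,dw_s$ and $\int_0^t\rho_1 e^{ps}\,dw_s$ via the \emph{exponential martingale inequality} plus Borel--Cantelli, and then lets $p\to0$; the linear combination $\theta_1\xi_t+\theta_2\eta_t$ is handled in one stroke. You instead work on the original scale with $e^t x_t$, use Doob's $L^2$-maximal inequality and Chebyshev, and reduce to integer times; this is more elementary but relies on the prior bounds $\sup_t\mathbb E x_t<\infty$ and $\sup_t\mathbb E x_t^2<\infty$ (these do hold, though the argument is closer in spirit to Theorem~\ref{T3} than to Theorem~\ref{T2}). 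Your separation of species via sub-additivity of $\limsup$ is a nice simplification the paper does not use.

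For the second inequality the paper's key device is to take $V(x,y)=\ln(\varsigma_1x^{\theta_1}+\varsigma_2y^{\theta_2})$ rather than $V_1$ itself. The payoff is that the It\^o martingale then has integrand $(\varsigma_1\theta_1\sigma_1x^{\theta_1}+\varsigma_2\theta_2\rho_1y^{\theta_2})/(\varsigma_1x^{\theta_1}+\varsigma_2y^{\theta_2})$, which is \emph{bounded}; hence $\langle M\rangle_t\le Ct$ pathwise and the martingale SLLN applies immediately, with no appeal to moment estimates. Your choice $V_1=\varsigma_1x^{\theta_1}+\varsigma_2y^{\theta_2}$ forces you to handle an unbounded integrand, which you do correctly with the $(1+s)^{-2}$ weight, moment bounds, and the Kronecker-type SLLN; it works, but the paper's log transform sidesteps this entirely.
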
 
\begin{proof}
Firstly, we prove the first inequality. Putting $x_t=\exp\{\xi_t\}, y_t=\exp\{\eta_t\}, \vartheta_1=a_1-\frac{\sigma_1^2}{2}, \vartheta_2=a_2+\frac{\rho_1^2}{2}$  and substituting this transformation into system \eqref{E4}, we obtain
\begin{equation} \label{E20}
\begin{cases}
\begin{aligned}
d\xi_t= &\left[\vartheta_1-b_1 \exp\{\xi_t\}-\frac{c_1\exp\{\eta_t\}}{\exp\{\xi_t\}+e \exp\{\eta_t\}}\right] dt+\sigma_1 dw_t,\\
d\eta_t=&\left[-\vartheta_2-b_2 \exp\{\eta_t\}+\frac{c_2 \exp\{\xi_t\}}{\exp\{\xi_t\}+e \exp\{\eta_t\}} \right]dt+\rho_1 dw_t,
\end{aligned}
\end{cases}
\end{equation}
or equivalently
\begin{equation*} 
\begin{cases}
d\xi_t= \left[ \vartheta_1-b_1 x_t-\frac{c_1y_t}{ x_t+e y_t}\right] dt+\sigma_1 dw_t,\\
d\eta_t=\left[- \vartheta_2 - b_2y_t+\frac{c_2x_t}{ x_t+e y_t}\right]dt+\rho_1dw_t.
\end{cases}
\end{equation*}
Fix $p>0$. Applying It\^o's formula to $\exp\{pt\}\xi_t$ and $\exp\{pt\}\eta_t$, from \eqref{E20}, we have
\begin{align} 
\exp\{pt\}\xi_t=&\xi_0+\int_0^t \exp\{ps\}\left[ \vartheta_1-b_1 x_s-\frac{c_1y_s}{x_s+e y_s}\right] ds \notag \\
 &+\int_0^t p\exp\{ps\} \xi_s ds+\int_0^t \sigma_1  \exp\{ps\} dw_s, \label{E22} \\
\exp\{pt\}\eta_t=&\eta_0+\int_0^t \exp\{ps\}\left[- \vartheta_2- b_2 y_s+\frac{c_2 x_s}{ x_s+e y_s} \right] ds \notag \\
 &+\int_0^t p\exp\{ps\} \eta_s ds+\int_0^t \rho_1 \exp\{ps\} dw_s. \label{E23} 
\end{align}
We set 
$$M_{1t}=\int_0^t\sigma_1\exp\{ps\} dw_s, M_{2t}=\int_0^t\rho_1 \exp\{ps\} dw_s,$$
 then $M_{it} \,(i=1,2)$ are real valued continuous martingales vanishing at $t=0$ with quadratic forms
$$<M_{1},M_{1}>_t=\int_0^t \sigma_1^2 \exp\{2ps\} ds, \quad <M_{2},M_{2}>_t=\int_0^t \rho_1^2 \exp\{2ps\} ds.$$
Let $\varepsilon\in (0,1)$ and $\theta>1$. Using the exponential martingale inequality \cite [Theorem 1.7.4]{M}, for every $k\geq 1$ and $ i=1,2$, we have
$$\mathbb P\left\{\sup_{0\leq t\leq k}\left[M_{it}-\frac{\varepsilon}{2}\exp\{-pk\} <M_{i},M_{i}>_t\right]\geq \frac{\theta \exp\{pk\}}{\varepsilon}\ln k\right\}\leq \frac{1}{k^\theta}\cdot$$
It then follows from Borel-Cantelli lemma that there exists an $\Omega_i\subset \Omega$ with $\mathbb P(\Omega_i)=1$ having the following property. For any $\omega\in \Omega_i,$ there exists $k_i=k_i(\omega)$ such that, for all $k\geq k_i$ and $t\in [0,k]$,
\begin{align*}
M_{1t}&\leq \frac{\varepsilon}{2}\exp\{-pk\} <M_1,M_1>_t +\frac{\theta \exp\{pk\}}{\varepsilon}\ln k\\
&= \frac{\varepsilon }{2}\exp\{-pk\} \int_0^t \sigma_1^2\exp\{2ps\} ds +\frac{\theta \exp\{pk\}}{\varepsilon}\ln k,\\
M_{2t}&\leq \frac{\varepsilon}{2}\exp\{-pk\} <M_2,M_2>_t +\frac{\theta \exp\{pk\}}{\varepsilon}\ln k\\
&= \frac{\varepsilon }{2}\exp\{-pk\} \int_0^t \rho_1^2\exp\{2ps\} ds +\frac{\theta \exp\{pk\}}{\varepsilon}\ln k.
\end{align*}
We therefore have from \eqref{E22} and \eqref{E23} that for any $\omega\in \Omega_1\cap \Omega_2$ and $ t\in [0,k], k\geq k_0(\omega),$ where $k_0(\omega)=k_1(\omega)\wedge k_2(\omega),$
\begin{align} 
\exp\{pt\}\xi_t\leq&\xi_0+\int_0^t p\exp\{ps\} \xi_s ds \notag \\
&+\int_0^t \exp\{ps\}\left[\vartheta_1-b_1 x_s-\frac{c_1y_s}{x_s+e y_s}\right] ds \notag\\
& +\frac{\varepsilon }{2}\exp\{-pk\} \int_0^t \sigma_1^2\exp\{2ps\} ds+\frac{\theta \exp\{pk\}}{\varepsilon}\ln k  \notag \\
=& \xi_0+p\int_0^t \exp\{ps\} \xi_s ds +\frac{\theta \exp\{pk\}}{\varepsilon}\ln k +\int_0^t \exp\{ps\}\Big[a_1\notag \\
&-b_1 x_s+\frac{\varepsilon \sigma_1^2}{2}\exp\{-p(k-s)\}-\frac{c_1y_s}{x_s+e y_s}\Big] ds, \notag\\
\label{E24} \\
\exp\{pt\}\eta_t\leq& \eta_0+\int_0^t p\exp\{ps\} \eta_s ds,\notag\\
&+\int_0^t \exp\{ps\}\left[-\vartheta_2-b_2 y_s-\frac{c_2x_s}{x_s+e y_s}\right] ds \notag \\
 &+\frac{\varepsilon }{2}\exp\{-pk\} \int_0^t \rho_1^2\exp\{2ps\} ds+\frac{\theta \exp\{pk\}}{\varepsilon}\ln k\notag\\
 =&  \eta_0+p\int_0^t \exp\{ps\} \eta_s ds +\frac{\theta \exp\{pk\}}{\varepsilon}\ln k +\int_0^t \exp\{ps\}\Big[-\vartheta_2\notag \\
&-b_2 y_s+\frac{\varepsilon\rho_1^2}{2}\exp\{-p(k-s)\}+\frac{c_2x_s}{x_s+e y_s}\Big] ds. \notag\\
  \label{E25} 
\end{align}
From \eqref{E24} and \eqref{E25}, for any $\omega\in \Omega_1\cap \Omega_2$ and $ t\in [0,k], k\geq k_0(\omega),$ we have
\begin{align}  \label{E26}
\exp\{pt\}(\theta_1 \xi_t+\theta_2 \eta_t) \leq &(\theta_1 \xi_0+\theta_2 \eta_0)+\frac{\theta(\theta_1+\theta_2) \exp\{pk\}}{\varepsilon}\ln k  \notag\\
&+\int_0^t \exp\{ps\}\Big[p(\theta_1 \xi_s+\theta_2 \eta_s)+\theta_1\vartheta_1-\theta_2\vartheta_2\notag \\
&+\frac{\varepsilon \left[\theta_1\sigma_1^2+\theta_2\rho_1^2\right] }{2}\exp\{-p(k-s)\} \notag\\
&-b_1\theta_1 x_s - b_2 \theta_2 y_s+\frac{\theta_2c_2x_s-\theta_1c_1y_s}{x_s+e y_s}\Big] ds.
\end{align}
Since $\theta_1, \theta_2 \in \mathbb R_+,$ there exists $H=H(p,\theta_1,\theta_2)>0$ such that for any $ (x,y,t)\in \mathbb R^2_+  \times \mathbb R_{+0},$
$$\Big[p(\theta_1 \ln x+\theta_2 \ln y)+\theta_1\vartheta_1-\theta_2\vartheta_2
-b_1\theta_1 x- b_2 \theta_2 y+\frac{\theta_2c_2x-\theta_1c_1y}{ x+e y}\Big]\leq H.$$
It then follows from \eqref{E26} that for any $\omega\in \Omega_1\cap \Omega_2$ and $ t\in [0,k], k\geq k_0(\omega),$ 
\begin{align*}
\exp\{pt\}(\theta_1 \xi_t+\theta_2 \eta_t) \leq &(\theta_1 \xi_0+\theta_2 \eta_0)+\frac{\theta(\theta_1+\theta_2) \exp\{pk\}}{\varepsilon}\ln k  \notag\\
&+\int_0^t \exp\{ps\}\Big[H+\frac{\varepsilon (\theta_1\sigma_1^2+\theta_2\rho_1^2) }{2}\Big] ds\notag\\
\leq&(\theta_1 \xi_0+\theta_2 \eta_0)+\frac{\theta(\theta_1+\theta_2) \exp\{pk\}}{\varepsilon}\ln k  \notag\\
&+\frac{1}{p} \left[H+\frac{\varepsilon (\theta_1{\sigma_1^u}^2+\theta_2{\rho_1^u}^2) }{2}\right](\exp\{pt\}-1).
\end{align*}
Thus,
\begin{align}  \label{E27}
\theta_1 \xi_t+\theta_2 \eta_t \leq &(\theta_1 \xi_0+\theta_2 \eta_0)\exp\{-pt\}+\frac{\theta(\theta_1+\theta_2) \exp\{p(k-t)\}}{\varepsilon}\ln k  \notag\\
&+\frac{1}{p} \left[H+\frac{\varepsilon (\theta_1{\sigma_1^u}^2+\theta_2{\rho_1^u}^2) }{2}\right](1-\exp\{-pt\})\notag\\
\leq &(\theta_1 \xi_0+\theta_2 \eta_0)+\frac{\theta(\theta_1+\theta_2) \exp\{p(k-t)\}}{\varepsilon}\ln k  \notag\\
&+\frac{1}{p} \left[H+\frac{\varepsilon (\theta_1{\sigma_1^u}^2+\theta_2{\rho_1^u}^2) }{2}\right],\notag\\
\end{align}
For any $\omega\in \Omega_1\cap \Omega_2$, $k\geq k_0(\omega)$ and $t\in [k-1,k]$, from \eqref{E27} we have
\begin{align*}
\frac{\theta_1 \xi_t+\theta_2 \eta_t}{\ln t}\leq &\frac{1}{\ln (k-1)}\Big[(\theta_1 \xi_0+\theta_2 \eta_0)+\frac{\theta(\theta_1+\theta_2) \exp\{p\}}{\varepsilon}\ln k \notag\\
&+\frac{1}{p} \Big\{H+\frac{\varepsilon (\theta_1{\sigma_1^u}^2+\theta_2{\rho_1^u}^2) }{2}\Big\}\Big],\notag
\end{align*}
from which implies
$$\limsup_{t\to\infty} \frac{\theta_1 \xi_t+\theta_2 \eta_t}{\ln t}\leq \frac{\theta(\theta_1+\theta_2) \exp\{p\}}{\varepsilon}\cdot$$
Letting $\varepsilon\to 1^-, \theta \to 1^+, p\to 0^+$ and noting  $\mathbb P(\Omega_1\cap \Omega_2)=1$  yields
$$\limsup_{t\to\infty} \frac{\theta_1 \xi_t+\theta_2 \eta_t}{\ln t}\leq \theta_1+\theta_2   \hspace{1cm} \text { a.s.,}$$
i.e.,
$$\limsup_{t\to \infty} \frac{\ln\left[x_t^{\theta_1}y_t^{\theta_2}\right]}{\ln t}\leq \theta_1+\theta_2  \hspace{1cm} \text { a.s. }$$
Now, we prove the remain inequality. Putting $V(x,y)=\ln (\varsigma_1 x^{\theta_1}+\varsigma_2 y^{\theta_2})$
 and using It\^o's formula, we get easily that
\begin{align} \label{E17}
dV(x_t,y_t)=& \Big[  \frac{\varsigma_1 \theta_1 x^{\theta_1} }{\varsigma_1 x^{\theta_1}+\varsigma_2 y^{\theta_2}}(a_1-b_1 x-\frac{c_1 y}{x+e y}) \notag\\
&  + \frac{\varsigma_2 \theta_2 y^{\theta_2} }{\varsigma_1 x^{\theta_1}+\varsigma_2 y^{\theta_2}}(-a_2-b_2 y+\frac{c_2x}{x+e y}) \notag  \\
&+ \frac{[\varsigma_1 \theta_1 (\theta_1-1)x^{\theta_1}(\varsigma_1 x^{\theta_1}+\varsigma_2 y^{\theta_2})-\varsigma_1^2 \theta_1^2 x^{2\theta_1} ] \sigma_1}{2(\varsigma_1 x^{\theta_1}+\varsigma_2 y^{\theta_2})^2}\notag  \\
 &+ \frac{[\varsigma_2 \theta_2 (\theta_2-1)y^{\theta_2}(\varsigma_1 x^{\theta_1}+\varsigma_2 y^{\theta_2})-\varsigma_2^2 \theta_2^2 y^{2\theta_2} ] \rho_1}{2(\varsigma_1 x^{\theta_1}+\varsigma_2 y^{\theta_2})^2} \notag \\ 
 &-\frac{\varsigma_1\varsigma_2 \theta_1\theta_2\sigma_1 \rho_1x^{\theta_1}y^{\theta_2}  }{2(\varsigma_1 x^{\theta_1}+\varsigma_2 y^{\theta_2})^2}  \Big]dt\notag\\
 &+\frac{\varsigma_1\theta_1 \sigma_1x^{\theta_1}+\varsigma_2\theta_2 \rho_1 y^{\theta_2}}{\varsigma_1 x^{\theta_1}+\varsigma_2 y^{\theta_2}} dw_t\notag\\
 =&P(x,y,t)dt+\frac{\varsigma_1\theta_1 \sigma_1 x^{\theta_1}+\varsigma_2\theta_2 \rho_1 y^{\theta_2}}{\varsigma_1 x^{\theta_1}+\varsigma_2 y^{\theta_2}} dw_t,
\end{align}
where 
\begin{align*}
P(x,y,t)=&\frac{\varsigma_1 \theta_1 x^{\theta_1} }{\varsigma_1 x^{\theta_1}+\varsigma_2 y^{\theta_2}}(a_1-b_1 x-\frac{c_1 y}{x+e y})\\
&  + \frac{\varsigma_2 \theta_2 y^{\theta_2} }{\varsigma_1 x^{\theta_1}+\varsigma_2 y^{\theta_2}}(-a_2-b_2 y+\frac{c_2x}{x+e y})   \\
&+ \frac{\varsigma_1 \theta_1 \sigma_1[\varsigma_2(\theta_1-1) y^{\theta_2}-\varsigma_1 x^{\theta_1} ] x^{\theta_1}}{2(\varsigma_1 x^{\theta_1}+\varsigma_2 y^{\theta_2})^2}  \\
 &+ \frac{\varsigma_2 \theta_2 \rho_1[\varsigma_1(\theta_2-1) x^{\theta_1}-\varsigma_2 y^{\theta_2} ] y^{\theta_2}}{2(\varsigma_1 x^{\theta_1}+\varsigma_2 y^{\theta_2})^2}\\ 
 &-\frac{\varsigma_1\varsigma_2 \theta_1\theta_2 \sigma_1 \rho_1x^{\theta_1}y^{\theta_2}  }{2(\varsigma_1 x^{\theta_1}+\varsigma_2 y^{\theta_2})^2} \cdot
\end{align*}
Putting 
$$K=\sup_{(x,y,t)\in R^2_+\times \mathbb R_{+0}}  \left [P(x,y,t)+(\varsigma_1 x^{\theta_1}+\varsigma_2 y^{\theta_2})\right]$$ 
and
$$M_t=\int_0^t \frac{\varsigma_1\theta_1 \sigma_1x^{\theta_1}+\varsigma_2\theta_2 \rho_1y^{\theta_2}}{\varsigma_1 x^{\theta_1}+\varsigma_2 y^{\theta_2}} dw_s,$$
then $\{M_t, \mathcal F_t, t\geq 0\}$ is a martingale, and by $ \theta_i\in [0,1), $  we see that  $K<\infty.$ The quadratic variation of $M_t$  can be shown by using \cite[Theorem 5.14, p.25]{M} as follows.
$$<M,M>_t=\int_0^t \left[\frac{\varsigma_1\theta_1\sigma_1 x^{\theta_1}+\varsigma_2\theta_2 \rho_1y^{\theta_2}}{\varsigma_1 x^{\theta_1}+\varsigma_2 y^{\theta_2}}  \right]^2ds.$$
It is easy to see that 
$$\limsup_{t\to \infty}\frac{<M,M>_t}{t}\leq \left[\max\{\theta_1 \sigma_1^u,\theta_2 \sigma_2^u\}\right]^2.$$
So, using the strong law of large numbers for martingale  \cite[Theorem 1.3.4]{M}, we have
\begin{equation} \label{E19}
\lim_{t\to \infty} \frac{M_t}{t}=0  \hspace{1cm}  \text{ a.s.}
\end{equation}
On the other hand,  from \eqref{E17}, we have
$$0<V(x_t,y_t)\leq \int_0^t [K -(\varsigma_1 x^{\theta_1}_s+\varsigma_2 y^{\theta_2}_s)]ds+M_t,$$
from which follows
$$ \frac{1}{t} \int_0^t (\varsigma_1 x^{\theta_1}_s+\varsigma_2 y^{\theta_2}_s)ds  \leq K+ \frac{M_t}{t}\cdot$$
Therefore, by \eqref{E19},
$$\limsup_{t\to \infty}  \frac{1}{t} \int_0^t (\varsigma_1 x^{\theta_1}_s+\varsigma_2 y^{\theta_2}_s)ds\leq K  \hspace{1cm} \text{ a.s.}$$
\end{proof}
\begin{remark}
For the deterministic version of model \eqref{E4}, i.e., $\sigma_i=\rho_i=0$ and the other coefficients are constants, it is easy to see that $\lim_{t\to \infty} y_t=0 $ holds under some special  conditions, i.e., the predator dies out, but it never get $\lim_{t\to \infty} x_t=0$ (if $\lim_{t\to \infty} y_t=0 $ then $\liminf_{t\to \infty} x_t\geq \frac{a_1}{b_1}>0$). However, in the above theorem, if $K=0$ then both prey and predator die out. This means that a relatively large stochastic perturbation can cause the extinction of the population. Further, the prey population dies out even if there is no predator and the death rate is so rapid (at an exponential rate). We can see that in two following theorems.
\end{remark}
\begin{theorem} \label{T6} 
Under condition {\rm(H1)}, if the prey is absent, i.e.,  $x_t=0$  a.s. for all $t\geq 0,$ then the predator dies with probability one. Furthermore, the death rate of predator is  exponential, i.e.,  
$$\limsup_{t\to \infty} \frac{\ln y_t}{t}\leq -\inf_{t\geq 0}\left [a_2(t)+\frac{\rho_1^2(t)}{2}\right]  \hspace{1cm} \text{ a.s.}$$
\end{theorem}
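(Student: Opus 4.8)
The plan is to reduce to a scalar stochastic logistic-type equation and then apply It\^o's formula to $\ln y_t$. Setting $x_t\equiv 0$ in \eqref{E4} and recalling that under {\rm(H1)} one has $\rho_2\equiv 0$, the coupling term $\frac{c_2(t)x_t}{x_t+e(t)y_t}$ vanishes, so the predator equation becomes
\[
dy_t=\bigl[-a_2(t)-b_2(t)y_t\bigr]y_t\,dt+\rho_1(t)y_t\,dw_t .
\]
First I would observe that, exactly as in the proof of Theorem \ref{T1} (pass to $\eta_t=\ln y_t$, whose coefficients are locally Lipschitz, and run the same localization argument with a one-dimensional Lyapunov function such as $y^{\theta}-\ln y$), this equation has a unique global solution which stays in $\mathbb R_+$ a.s. whenever $y_0>0$; in particular $\ln y_t$ is well defined for all $t\ge 0$.

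Next, It\^o's formula applied to $\ln y_t$ gives
\[
\ln y_t=\ln y_0+\int_0^t\Bigl[-a_2(s)-\tfrac12\rho_1^2(s)-b_2(s)y_s\Bigr]ds+\int_0^t\rho_1(s)\,dw_s .
\]
Since $b_2(s)y_s\ge 0$ for all $s$, I would discard that term and bound the remaining drift below by its infimum over $s\ge 0$, obtaining
\[
\ln y_t\le \ln y_0-t\,\inf_{s\ge 0}\Bigl[a_2(s)+\tfrac12\rho_1^2(s)\Bigr]+M_t,\qquad M_t:=\int_0^t\rho_1(s)\,dw_s .
\]

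The martingale $M_t$ has quadratic variation $\langle M,M\rangle_t=\int_0^t\rho_1^2(s)\,ds$, which satisfies $\limsup_{t\to\infty}\langle M,M\rangle_t/t\le(\rho_1^u)^2<\infty$, so the strong law of large numbers for martingales (\cite[Theorem 1.3.4]{M}) yields $M_t/t\to 0$ a.s. Dividing the last display by $t$ and letting $t\to\infty$ then gives
\[
\limsup_{t\to\infty}\frac{\ln y_t}{t}\le-\inf_{s\ge 0}\Bigl[a_2(s)+\tfrac12\rho_1^2(s)\Bigr]\qquad\text{a.s.},
\]
which is the asserted exponential death rate. Finally, since all deterministic coefficients are bounded below by positive constants, $\inf_{s\ge 0}[a_2(s)+\tfrac12\rho_1^2(s)]\ge a_2^l>0$, so this $\limsup$ is strictly negative and hence $y_t\to 0$ a.s., i.e.\ the predator dies out with probability one. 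I do not expect a genuine obstacle here: the only points needing a line of care are the degenerate well-posedness and positivity of the reduced $y$-equation, and checking that the logarithmic drift is bounded above by $-a_2^l<0$ after the self-limitation term $-b_2(s)y_s$ has been thrown away.
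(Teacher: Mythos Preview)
Your proposal is correct and follows essentially the same approach as the paper: reduce to the scalar predator equation, apply It\^o's formula to $\eta_t=\ln y_t$, drop the nonpositive term $-b_2(s)y_s$, bound the drift by $-\inf_{s\ge 0}[a_2(s)+\tfrac12\rho_1^2(s)]$, and conclude via the strong law of large numbers for the martingale $M_t=\int_0^t\rho_1(s)\,dw_s$. Your added remarks on well-posedness of the reduced equation and on strict negativity of the bound are not in the paper's proof but only make the argument more complete.
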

\begin{proof}
The quantity $y_t=\exp\{\eta_t\}$ of predator at the time $t$ satisfies the following equation 
$$d\eta_t=\left[-a_2-\frac{\rho_1^2}{2}-b_2 \exp\{\eta_t\} \right]dt+\rho_1dw_t.$$
Thus, 
\begin{align} \label{E28}
\eta_t=&\eta_0+ \int_0^t \left[-a_2-\frac{\rho_1^2}{2}-b_2 \exp\{\eta_s\} \right]ds+ M_t \notag\\
\leq & \eta_0 - \inf_{t\geq 0}\left [a_2(t)+\frac{\rho_1^2(t)}{2}\right]  t +M_t,
 \end{align}
 where 
 $M_t=\int_0^t \rho_1(s) dw_s$
 is a martingale. The quadratic variation of $M_t$
$$<M,M>_t=\int_0^t \rho_1^2(s)ds$$
satisfying
$$\limsup_{t\to \infty}\frac{<M,M>_t}{t}\leq {\rho_1^u}^2.$$
 Using the strong law of large numbers for martingales gives
 $\lim_{t\to \infty} \frac{M_t}{t}=0  \,\,  a.s.$
It then follows from \eqref{E28} that 
$$\limsup_{t\to \infty} \frac{\ln y_t}{t}\leq -\inf_{t\geq 0}\left [a_2(t)+\frac{\rho_1^2(t)}{2}\right]  $$
 and $\lim_{t\to \infty}y_t=0$  a.s. 
\end{proof}
\begin{theorem} \label{T7}
Under condition {\rm(H1)}, if the predator is absent, i.e., $y_t=0$ a.s. for all $t\geq 0,$ then  the quantity of prey satisfies the following
\begin{itemize}
\item [(i)] 
If $\sup_{t\geq 0}\left\{a_1(t)-\frac{\sigma_1^2(t)}{2}\right\}<0$ then $\lim_{t\to \infty} x_t=0 \text{ a.s.}$ and the prey dies out at an exponential rate;
\item[(ii)] If $\sup_{t\geq 0}\left\{a_1(t)-\frac{\sigma_1^2(t)}{2}\right\}=0$ then $\lim_{t\to \infty} \mathbb E x_t=0;$
\item [(iii)]  
$$\limsup_{t\to \infty} \frac{\ln x_t}{\ln t}\leq 1  \hspace{1cm} \text{ a.s.}$$
\end{itemize}
\end{theorem}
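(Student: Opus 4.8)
\emph{Proof sketch (plan).} Under {\rm(H1)} we have $\sigma_2\equiv 0$, and when the predator is absent the $x$-equation in \eqref{E4} degenerates to the logistic It\^o equation $dx_t=[a_1(t)-b_1(t)x_t]x_t\,dt+\sigma_1(t)x_t\,dw_t$. Writing $x_t=\exp\{\xi_t\}$, It\^o's formula gives
\[
\xi_t=\xi_0+\int_0^t\Big[a_1(s)-\tfrac12\sigma_1^2(s)\Big]ds-\int_0^t b_1(s)x_s\,ds+M_t,\qquad M_t:=\int_0^t\sigma_1(s)\,dw_s .
\]
Since $\langle M\rangle_t=\int_0^t\sigma_1^2(s)\,ds\le(\sigma_1^u)^2t$, the strong law of large numbers for martingales (exactly as in the proof of Theorem \ref{T6}) gives $M_t/t\to0$ a.s. For (i), dropping the nonpositive term $-\int_0^t b_1(s)x_s\,ds$ and using $a_1(s)-\tfrac12\sigma_1^2(s)\le\sup_{s\ge0}\{a_1(s)-\tfrac12\sigma_1^2(s)\}=:-\kappa<0$ gives $\xi_t\le\xi_0-\kappa t+M_t$, hence $\limsup_{t\to\infty}\xi_t/t\le-\kappa<0$ a.s., so $x_t\to0$ a.s.\ at an exponential rate. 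Part (iii) is the first assertion of Theorem \ref{T4} specialised to $(\theta_1,\theta_2)=(1,0)$: the $e^{pt}\xi_t$ exponential‑martingale‑inequality plus Borel--Cantelli argument used there involves only the prey equation (with $p\ln x-b_1 x$ bounded above) and goes through verbatim when $y\equiv0$, yielding $\limsup_{t\to\infty}\ln x_t/\ln t\le1$ a.s.

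For (ii) the drift of $\xi_t$ is only $\le0$, so the pathwise argument of (i) fails and I would solve the equation explicitly. Applying It\^o's formula to $1/x_t$ turns it into the linear SDE $d(1/x_t)=[(\sigma_1^2-a_1)(1/x_t)+b_1]\,dt-\sigma_1(1/x_t)\,dw_t$, whose solution gives
\[
x_t=\frac{Y_t}{1/x_0+\int_0^t b_1(s)Y_s\,ds},\qquad Y_t:=\exp\Big\{\int_0^t\big[a_1(s)-\tfrac12\sigma_1^2(s)\big]ds+M_t\Big\}.
\]
Bounding the denominator below by $b_1^l\int_{t/2}^t Y_s\,ds$, factoring out $Y_t$, and using $\int_s^t[a_1(r)-\tfrac12\sigma_1^2(r)]\,dr\le0$ for $s\le t$, one obtains
\[
x_t\le\frac{1}{b_1^l\,G_t},\qquad G_t:=\int_{t/2}^t e^{\,M_s-M_t}\,ds=\int_0^{t/2}\exp\{-(M_t-M_{t-u})\}\,du .
\]
For each fixed $t$, $u\mapsto M_t-M_{t-u}$ is, after the deterministic time change $u\mapsto\int_{t-u}^t\sigma_1^2(r)\,dr\in[(\sigma_1^l)^2u,(\sigma_1^u)^2u]$, a Brownian motion (Dubins--Schwarz together with time reversal), so $G_t$ stochastically dominates $(\sigma_1^u)^{-2}\int_0^{ct}e^{-\beta_v}\,dv$ with $c=(\sigma_1^l)^2/2>0$ and $\beta$ a standard Brownian motion. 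Since $\int_0^s e^{-\beta_v}\,dv\uparrow\int_0^\infty e^{-\beta_v}\,dv=\infty$ a.s., we get $G_t\to\infty$ in probability, hence $x_t\to0$ in probability. Finally $\{x_t\}_{t\ge0}$ is uniformly integrable, because an It\^o/Gronwall estimate of the type used in Section 3 gives $\sup_{t\ge0}\mathbb E x_t^{1+\varepsilon}<\infty$ for small $\varepsilon>0$ (here the bound $a_1\le\tfrac12\sigma_1^2$ enters). Convergence in probability together with uniform integrability then yields $\lim_{t\to\infty}\mathbb E x_t=0$.

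The routine parts are (i) and (iii); the genuine obstacle is (ii). The hypothesis $\sup_{t\ge0}\{a_1(t)-\tfrac12\sigma_1^2(t)\}=0$ is critical: neither the exponential pathwise decay of (i) nor an elementary comparison for the ODE $\tfrac{d}{dt}\mathbb E x_t=a_1\mathbb E x_t-b_1\mathbb E x_t^2$ (which does not see the noise, since the perturbation $\sigma_1 x\,dw$ is geometric) produces the conclusion. One is therefore forced to use the explicit representation of $x_t$ and to control the Brownian functional $G_t$ via time reversal; the passage from convergence in probability to convergence in mean is what makes the uniform‑integrability input from the moment bounds of Section 3 indispensable.
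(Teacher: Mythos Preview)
Your treatment of (i) and (iii) coincides with the paper's: for (i) the paper bounds $d\xi_t\le\sup_{t\ge0}\{a_1-\tfrac12\sigma_1^2\}\,dt+\sigma_1\,dw_t$ and invokes the strong law of large numbers for martingales exactly as in Theorem~\ref{T6}; for (iii) it simply refers back to the proof of Theorem~\ref{T4}.

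For (ii) the paper takes a completely different and far shorter route than yours. Taking expectations in the identity for $\xi_t=\ln x_t$ and using $a_1-\tfrac12\sigma_1^2\le0$ gives $\mathbb E\xi_t\le\xi_0-b_1^l\int_0^t\mathbb E x_s\,ds$; one application of Jensen ($\mathbb E e^{\xi_s}\ge e^{\mathbb E\xi_s}$) and comparison with the ODE $Z'=-b_1^l e^{Z}$, $Z_0=\xi_0$, then yield $\mathbb E\xi_t\le Z_t=-\ln(b_1^l t+e^{-\xi_0})\to-\infty$. The paper finishes by saying that ``using Jensen's inequality again'' gives $\lim_{t\to\infty}\mathbb E x_t=0$. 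Note, however, that this last invocation of Jensen points the wrong way: one only gets $\mathbb E x_t=\mathbb E e^{\xi_t}\ge e^{\mathbb E\xi_t}$, and $\mathbb E\xi_t\to-\infty$ by itself does \emph{not} force $\mathbb E x_t\to0$. Your longer argument---the explicit representation of $x_t$, the time-reversal/Dubins--Schwarz step to get $x_t\to0$ in probability, and a uniform-integrability bound $\sup_{t\ge0}\mathbb E x_t^{1+\varepsilon}<\infty$---actually closes this gap. (Incidentally, that moment bound follows under (H1) from the boundedness of $a_1,\sigma_1$ and $b_1^l>0$ alone, via the $e^tV$ trick of Section~3; the hypothesis $a_1\le\tfrac12\sigma_1^2$ is used only in your probability step, not for uniform integrability.) So while the paper's approach is much more elementary in intention, as written it is incomplete at the very last step, and your heavier route is the one that genuinely delivers the stated conclusion.
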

\begin{proof}
Similarly to Theorem \ref{T6}, the quantity $x_t=\exp\{\xi_t\}$ of prey at time $t$ satisfies the following equation
\begin{equation*}
d\xi_t=\left[a_1-\frac{\sigma_1^2}{2}-b_1 \exp\{\xi_t\} \right]dt+\sigma_1dw_t .
\end{equation*}
For Case (i), we have 
$$d\xi_t\leq \sup_{t\geq 0}\left\{a_1(t)-\frac{\sigma_1^2(t)}{2}\right\} dt+\sigma_1 dw_t.$$
Using the same arguments as in Theorem \ref{T6} yields
$$\limsup_{t\to \infty} \frac{\ln x_t}{t}\leq  \sup_{t\geq 0}\left\{a_1(t)-\frac{\sigma_1^2(t)}{2}\right\}<0,$$
 from which follows  that $\lim_{t\rightarrow \infty} x_t=0$ a.s. and the death rate of prey is exponential. Consider Case (ii). 
It follows from 
\begin{equation*}
\xi_t=\xi_0+\int_0^t \left[a_1-\frac{\sigma_1^2}{2}-b_1 \exp\{\xi_s\} \right]ds+\int_0^t \sigma_1(s) dw_s,
\end{equation*}
and Jensen's inequality that
$$\mathbb E\xi_t\leq \xi_0- b_1^l \int_0^t \mathbb E \exp\{\xi_s\}ds\leq \xi_0- b_1^l \int_0^t \exp\{\mathbb E\xi_s\}ds.$$
Therefore, $\mathbb E\xi_t \leq Z_t$ where $Z_t$ is the solution of the following differential equation
$$Z'_t=- b_1^l  \exp\{Z_t\}, \quad Z_0=\xi_0.$$
It is easy to see that 
$Z_t=-\log [b_1^l t+\exp\{-\xi_0\}] \to -\infty \text{ as } t\to \infty,$
then $\lim_{t\to \infty} \mathbb E\xi_t=-\infty.$ Using Jensen inequality again gives  $\mathbb E x_t=0.$
The proof of Case (iii) is similar to one of Theorem \ref{T4}. We therefore omit it here.

{\bf Acknowledgement.} 
The authors would like to thank the anonymous referee(s) for his very helpful suggestions which greatly improve the manuscript. 
\end{proof}

\end{document}